\documentclass{amsart}
\usepackage{amsmath,amssymb,latexsym,soul,cite}
\usepackage[dvipsnames]{xcolor}
\usepackage{color,enumitem,graphicx}
\usepackage[colorlinks=true,urlcolor=blue,
citecolor=red,linkcolor=blue,linktocpage,pdfpagelabels,
bookmarksnumbered,bookmarksopen]{hyperref}
\usepackage[utf8]{inputenc}
\usepackage[hyperpageref]{backref}

\usepackage[left=2.8cm,right=2.8cm,top=2.9cm,bottom=2.9cm]{geometry}

\numberwithin{equation}{section}

\newtheorem{thm}{Theorem}[section]
  \theoremstyle{plain}
  \newtheorem{lem}[thm]{Lemma}
  \theoremstyle{plain}
  
  \theoremstyle{plain}
  
  \theoremstyle{plain}
  
    \theoremstyle{definition}

\newcommand{\R}{{\mathbb R}}
\newcommand{\N}{{\mathbb N}}

\title[]{The buckling eigenvalue problem in the annulus}

\author[D. Buoso]{Davide Buoso}
 \author[E. Parini]{Enea Parini}

\address[D.\ Buoso]{\'Ecole Polytechnique F\'ed\'erale de Lausanne, MATH SCI-SB-JS 
\newline\indent
Station 8, CH-1015 Lausanne, Switzerland}
\email{davide.buoso@epfl.ch}

\address[E.\ Parini]{Aix-Marseille Universit\'e, CNRS, Centrale Marseille, I2M, UMR 7373
\newline\indent
39 Rue Frederic Joliot Curie, 13453 Marseille, France}
\email{enea.parini@univ-amu.fr}

\keywords{Buckling problem, Bilaplacian, eigenvalues, annulus, positive eigenfunctions}
\subjclass[2010]{\text{Primary 35P15. Secondary 35B09, 35J30, 74K20}}

\thanks{}

\begin{document}

\begin{abstract}
We consider the buckling eigenvalue problem for a clamped plate in the annulus. We identify the first eigenvalue in dependence of the inner radius, and study the number of nodal domains of the corresponding eigenfunctions. Moreover, in order to investigate the asymptotic behavior of eigenvalues and eigenfunctions as the inner radius approaches the outer one, we provide an analytical study of the buckling problem in rectangles with mixed boundary conditions.
\end{abstract}

\maketitle


\section{Introduction}

If we consider the eigenvalue problem for the Dirichlet Laplacian on a bounded domain $\Omega\subset\mathbb{R}^N$,
\begin{equation}
\label{laplacian}
\begin{cases}
-\Delta u  = \lambda u, & \text{in }\Omega, \\
u  = 0, & \text{on }\partial \Omega,
\end{cases} 
\end{equation}
the first eigenvalue $\lambda_1^D(\Omega)$, differently from higher ones, exhibits some remarkable properties. Notably, the first eigenvalue is simple, and every associated eigenfunction does not change sign in $\Omega$. Higher order eigenvalue problems, on the other hand, present a different situation. For instance, in the case of the Dirichlet Bilaplacian
\begin{equation}
\label{bilaplacian}
\begin{cases}
\Delta^2 u  = \lambda u, & \text{in }\Omega, \\
u  = \partial_\nu u = 0, & \text{on }\partial \Omega,
\end{cases} 
\end{equation}
where $\nu$ denotes the unit outer normal, the first eigenfunction might be sign-changing. This is the case when $\Omega$ is a square \cite{coffmansquare}, or an elongated ellipse \cite{garabedian}. More generally, if the domain has corners, then all the eigenfunctions oscillate in the corner (see \cite{codu,codu2,kkm}). In addition, the first eigenvalue might even be multiple: this is indeed the case for annuli with small inner radii (see \cite{coffmanduffinshaffer}).

Strangely enough, the buckling eigenvalue problem
\begin{equation}
\label{bucklingequation}
\begin{cases}
\Delta^2 u = -\lambda \Delta u, & \text{in }\Omega, \\
u = \partial_\nu u=0, & \text{on }\partial \Omega,
\end{cases} 
\end{equation}
presents features that puts it in between the Dirichlet Laplacian and the Dirichlet Bilaplacian. In fact, while it could be thought of as a Laplacian applied on the space $\nabla H^2_0(\Omega)\subset(H^1(\Omega))^N$, and even the Weyl limit is the same as for problem \eqref{laplacian}, the arguments that are used to prove simplicity and positivity of the first eigenfunction do not apply here. Indeed, as for the Bilaplacian, all the eigenfunctions of the buckling problem oscillate in corners (see \cite{kkm}).

The fact that the first eigenfunction of higher order operators can be sign changing, as well as the lack of a maximum principle, has sparkled a lot of interest in the literature towards the understanding of how a solution can change sign or in which cases the so-called ``positivity preserving property'' holds or, equivalently, under which assumptions the Green function is positive. While for second order problems these questions are trivial because of the maximum principle and other important tools such as the Krein-Rutman theory, their higher order counterparts have shown to be extremely difficult to approach and, apart from the case of corners \cite{kkm}, only very specific cases have been successfully studied, usually for the clamped plate problem (we refer the reader to \cite{ggs} for an extensive discussion). Even estimates for the first eigenvalue on a rectangle turn out to be much more difficult to obtain than those for the Laplacian, see \cite{buoso-freitas, owen}. Regarding the buckling problem \eqref{bucklingequation}, even though the results in \cite{kkm} still apply, very little is known for shapes different from balls, where the spectrum and the eigenspaces can be completely characterized in terms of Bessel functions.

In this paper, we study the buckling eigenvalues on annuli with varying inner radius. While it is possible to write the eigenfunctions by means of Bessel functions and harmonic polynomials, the eigenvalues are solutions to complicated transcendental equations and therefore it is difficult to order them in an analytical way. For this reason, with the aid of the software Mathematica\texttrademark, we compute the lower eigenvalues in order to find the smallest one (i.e., the first one). In particular, since all eigenfunctions can be written as the product of a radial function and a spherical harmonic, this allows to give an estimate of how many nodal domains the first eigenfunction presents.

In \cite{coffmanduffinshaffer}, the authors show that the first eigenvalue of problem \eqref{bilaplacian} on an annulus is double and the corresponding eigenfunctions of problem \eqref{bilaplacian} has one nodal line corresponding to the diameter, if the inner radius is small enough. Conversely, when the inner radius increases, after the threshold, the first eigenvalue becomes simple and with a (strictly) positive eigenfunction. We discover that the behavior of the buckling problem \eqref{bucklingequation} is totally different, since the first eigenvalue appears to be always multiple, while the number of nodal domains of the corresponding eigenfunctions is increasing with respect to the inner radius. Even though this result at first may be unexpected when compared with the analogues for \eqref{laplacian} and \eqref{bilaplacian}, we remark that higher order problems present a great variety of properties that, in many situations, have yet to be unveiled.

When the annulus shrinks towards the circumference, we observe that the number of diametric nodal lines increases as well. In particular, as these nodal lines are proliferating, we obtain an increase in nodal domains that are annular sectors but close (in a suitable sense) to rectangles. It makes sense then to study the buckling problem on a shrinking rectangle, with Dirichlet boundary conditions ($u=\partial_\nu u=0$) on the long edges and Navier ones ($u=\Delta u=0$) on the shrinking edges. Even though the two problems are not equivalent, the buckling problem on the rectangle with mixed boundary conditions is much easier to study and can provide qualitative information on the behavior of the eigenfunctions on a shrinking annulus. We note that biharmonic problems on rectangles with mixed boundary conditions have been studied recently in \cite{fergaz}, in relation with the stability of suspension bridges. 

Another interesting limiting problem is provided by the punctured disk, which is obtained when the inner radius converges to zero. Differently from the Laplacian, the punctured disk is not equivalent to the full disk for Bilaplacian problems such as \eqref{bilaplacian} or \eqref{bucklingequation}. The spectrum in this limiting case was already studied for the Dirichlet Bilaplacian in \cite{coffmanduffinshaffer} numerically, and in \cite{codu2} analytically, by means of an intricate study of the eigenvalues and using tables of values of zeroes of Bessel functions. Indeed, while the first eigenfunction of the Dirichlet Bilaplacian on the disk contains the Bessel function $J_0$, the first eigenfunction on the punctured disk contains instead $J_1$. While the behavior of problem \eqref{bucklingequation} differs from that of \eqref{bilaplacian} when the annulus shrinks, at the other end the two coincide, even though the two first eigenfunctions are not related.

As problem \eqref{laplacian} can be used to model vibrating elastic membranes, also problems \eqref{bilaplacian} and \eqref{bucklingequation} are of relevance in the theory of mechanics of deformable solids, and they are derived from the so-called Kirchhoff-Love model of plates (see \cite{rayleigh}). In particular, the Dirichlet Bilaplacian \eqref{bilaplacian} is also called ``vibrating clamped plate problem'' as it models the vibration of a clamped plate, while the buckling problem \eqref{bucklingequation} is related with the critical tension that leads to the deformation of a clamped plate. Hence, the interest for these problems comes not only from the mathematical point of view, but also from applications. As a consequence, shedding light on difficult questions like the ones we consider in this paper is more than mathematical speculation, since it can provide new ideas for the several applications that make use of these problems.

The paper is organized as follows. In Section 2 we set up the problem and state some of the basic properties of the eigenvalues. In Section 3 we compute the eigenfunctions and the eigenvalues of the buckling problem \eqref{bucklingequation} on an annulus and on the punctured disk, while Section 4 is devoted to the numerical calculations. In Section 5 we focus on the buckling problem on a rectangle with mixed boundary conditions. Finally, in Section 6 we collect a number of observations and open questions.


\section{Preliminaries}
\label{sec2}

Let $\Omega \subset \R^N$ be a bounded domain (i.e., an open connected set). We say that $\lambda \in \R$ is an eigenvalue of the buckling problem \eqref{bucklingequation} if there exists a function $u \in H^2_0(\Omega) \setminus \{0\}$ (the eigenfunction) which satisfies the equation in the weak sense, namely,
\begin{equation*}
 \int_\Omega \Delta u \Delta v = \lambda \int_\Omega \nabla u \nabla v, \qquad \text{for every } v \in H^2_0(\Omega).
\end{equation*}
The buckling eigenvalue problem admits an increasing sequence of (strictly) positive eigenvalues of finite multiplicity diverging to infinity, 
\[ 0 < \lambda_1(\Omega) \leq \lambda_2(\Omega) \leq ... \leq \lambda_k(\Omega) \to +\infty, \qquad \text{as }k \to +\infty\],
(see e.g., \cite[Lemma 2.1]{buosolamberti}). Additionally, the $k$-th eigenvalue can be characterized variationally as
\[ \lambda_k(\Omega)= \min_{\substack{ V \subseteq H^2_0(\Omega) \\ \dim V=k}}\  \max_{u\in V\setminus \{0\}} \frac{\int_\Omega |\Delta u|^2}{\int_\Omega |\nabla u|^2}.\]
This characterization implies the monotonicity of $\lambda_k$ with respect to set inclusion:
\begin{equation}
\label{crescenza}
\Omega_1 \subset \Omega_2 \Rightarrow \lambda_k(\Omega_2) \leq \lambda_k(\Omega_1).
\end{equation}
Problem \eqref{bucklingequation} also enjoys the following scaling property: for $t>0$, if we set
\[ t\Omega :=\{ x \in \R^N \,|\, x/t \in \Omega\},\]
we have
\[ \lambda_k(t \Omega) = \frac{1}{t^2}\lambda_k(\Omega).\]
This implies in particular that the shape functional
\[ \Omega \mapsto \lambda_1(\Omega)|\Omega|^{\frac{2}{N}}\]
is scaling invariant.

When $\Omega=B_R \subset \R^2$ is a disk of radius $R$, the eigenfunctions have the form
\begin{equation}
\label{eigenball}
u(r,\theta)=\left(J_k(r\sqrt{\lambda})-\frac{J_k(R\sqrt{\lambda})}{R^k}r^k\right)e^{\pm i k \theta},
\end{equation}
where $(r,\theta)$ are polar coordinates, and the corresponding eigenvalues are
\[
\lambda=\left(\frac{j_{k+1,t}}{R}\right)^2,
\]
for some $t\in\mathbb N^*$, where $j_{\nu,t}$ is the $t$-th zero of the Bessel function $J_\nu$.

In particular, for $R=1$ the first eigenvalue is \[ \lambda_1(B_1)=j_{1,1}^2 \simeq 14.6819,\]
and the associated eigenfunctions are radially symmetric and do not change sign in $B_1$.
It is also worth mentioning that, for convex planar domains, Payne was able to give sharp estimates of $\lambda_1(\Omega)$ in terms of the first two eigenvalues $\lambda_1^D(\Omega)$ and $\lambda_2^D(\Omega)$ of the Dirichlet Laplacian \eqref{laplacian}:
\[ \lambda_2^D(\Omega) \leq \lambda_1(\Omega) \leq 4 \lambda_1^D(\Omega),\]
where the first equality holds true when $\Omega$ is a disk, and the second one holds true in the limit when $\Omega$ is an infinite strip (see \cite{payne}).


\section{The annulus and the punctured disk}

We turn now to the computation of the solutions of the buckling problem \eqref{bucklingequation} in the annulus
\[ \Omega_a := \{x \in \R^2 \,|\,a<|x|<1\},\]
with $a \in (0,1)$. It is classical to show that eigenfunctions are of the form
\begin{equation}
\label{eigenfct}
 u(r,\theta) = v(r)e^{\pm ik\theta}, \qquad k \in \mathbb{N}.
\end{equation}
We refer for instance to \cite[Section V.5]{couranthilbert} for the case of the Laplacian. Set $\mu = \sqrt{\lambda}$. As in \cite[Lemma 2.1]{decoster}, we have that, if $k=0$,
\begin{equation*}
v(r) = AJ_0(\mu r) + B Y_0(\mu r) + C + D \ln{r},
\end{equation*}
while if $k \geq 1$,
\begin{equation}
\label{vk}
v(r) = AJ_k(\mu r) + B Y_k(\mu r) + Cr^k + D r^{-k}.
\end{equation}
Here $Y_k$ is the Bessel function of the second kind of order $k$.

Using \eqref{eigenfct}, the boundary conditions in \eqref{bucklingequation} become
\[v(a),v'(a),v(1),v'(1)=0,\]
so that, in the case $k=0$, we obtain the following system
\[ \left\{\begin{array}{l}
AJ_0(\mu) + B Y_0(\mu) + C = 0 \\
A\mu J_0'(\mu) + B\mu Y_0'(\mu) + D = 0 \\
AJ_0(\mu a) + B Y_0(\mu a) + C + D \ln{a} = 0 \\
A\mu J_0'(\mu a) + B\mu Y_0'(\mu a)  + D \frac{1}{a} = 0           
\end{array} \right.\]
or, in matrix form,
\begin{equation}\label{matrice1}
\begin{pmatrix}
J_0(\mu) & Y_0(\mu) & 1 & 0 \\
\mu J_0'(\mu) & \mu Y_0'(\mu) & 0 &  1 \\
J_0(\mu a) & Y_0(\mu a) & 1 &  \ln{a}\\
\mu J_0'(\mu a) & \mu Y_0'(\mu a) & 0 &  \frac{1}{a}
\end{pmatrix}
\begin{pmatrix} A \\ B \\ C \\ D \end{pmatrix}
= \begin{pmatrix} 0 \\ 0 \\ 0\\ 0\end{pmatrix}.
\end{equation}

Since a solution to \eqref{matrice1} exists only if the determinant of the matrix is zero, we compute it and, after using several properties of Bessel functions (see e.g., \cite[Ch.\ 9]{abrste}), we obtain that it equals
\begin{multline}
\label{zeroes0}
\frac{4}{\pi a}+ \mu \left( J_0(\mu)Y_1(\mu a) - J_1(\mu a)Y_0(\mu)\right) - \mu^2 \ln{a} \left(J_1(\mu)Y_1(\mu a)-J_1(\mu a)Y_1(\mu)\right)\\
+\frac{\mu}{a}\left(J_0(\mu a)Y_1(\mu)-J_1(\mu)Y_0(\mu a)\right).
\end{multline}

On the other hand, in the case $k \geq 1$, the boundary conditions yield
\[ \left\{\begin{array}{l}
AJ_k(\mu) + B Y_k(\mu) + C + D = 0 \\
A\mu J_k'(\mu) + B\mu Y_k'(\mu) + Ck - Dk = 0 \\
AJ_k(\mu a) + B Y_k(\mu a) + C a^k + D a^{-k} = 0 \\
A\mu J_k'(\mu a) + B\mu Y_k'(\mu a)  + Ck a^{k-1} - Dk a^{-k-1} = 0           
\end{array} \right.\]
or, in matrix form,
\begin{equation}\label{matrice2}
\begin{pmatrix}
J_k(\mu) & Y_k(\mu) & 1 & 1 \\
\mu J_k'(\mu) & \mu Y_k'(\mu) & k &  -k \\
J_k(\mu a) & Y_k(\mu a) & a^k &  a^{-k}\\
\mu J_k'(\mu a) & \mu Y_k'(\mu a) & ka^{k-1} &  -ka^{-k-1}
\end{pmatrix}
\begin{pmatrix} A \\ B \\ C \\ D \end{pmatrix}
= \begin{pmatrix} 0 \\ 0 \\ 0\\ 0\end{pmatrix}.
\end{equation}

Again, since a solution to \eqref{matrice2} exists only if the determinant of the matrix is zero, we compute it and, after using several properties of Bessel functions, we obtain that it equals
\begin{multline}
\label{zeroesk}
\mu^2 \left(a^k-\frac{1}{a^k}\right)\left(J_{k-1}(\mu)Y_{k-1}(\mu a)- J_{k-1}(\mu a)Y_{k-1}(\mu)\right)- \frac{8k}{\pi a}\\ 
+2k\mu a^{k-1}\left(J_{k}(\mu a)Y_{k-1}(\mu)-J_{k-1}(\mu)Y_{k}(\mu a)\right) +\frac{2k\mu}{a^k}\left(J_k(\mu)Y_{k-1}(\mu a)-J_{k-1}(\mu a)Y_{k}(\mu)\right) .
\end{multline}

We observe that, while in a ball the ordering of the eigenvalues is directly linked to the ordering of zeroes of Bessel functions (cf.\ Section \ref{sec2}), the case of the annulus is much more involved as we would need to understand how the zeroes of the functions \eqref{zeroes0} and \eqref{zeroesk} interlace. Therefore, for the annulus the difficulty of ordering the eigenvalues is amplified, and we will do it numerically in Section \ref{sec3}.

If we set $a=0$, $\Omega_0$ is then a punctured disk, and it is possible to set the buckling problem \eqref{bucklingequation} in $\Omega_0$. In this case, though, problem \eqref{bucklingequation} has to be interpreted in a slightly different way.
We first observe that, since a point in $\R^2$ has positive $H^2$-capacity, but zero $H^1$-capacity, functions in $H^2_0(\Omega_0)$ must equal zero at the origin, but their gradient need not vanish. Nevertheless, the gradient of any eigenfunction is bounded in $\Omega_0$. This follows from the fact that eigenfunctions are of the form \eqref{eigenfct} and that their gradient, as a function of $H^1(\Omega_0)^2$, is absolutely continuous on almost every line passing through the origin. This readily implies that the boundary conditions for the eigenfunctions translate as
$$
v(0),v(1),v'(1)=0,\ \ v'(0)\in\mathbb{R}.
$$
We mention that the boundedness of the gradient of eigenfunctions in $\Omega_0$ can also be deduced from a more general result by Mayboroda and Maz'ya \cite[Proposition 8.1]{mayborodamazya}.

In order to compute the eigenfunctions and the eigenvalues, we need to recall the asymptotic behaviors of Bessel functions $Y$ around zero (cf.\ \cite[formulas 9.1.11 and 9.1.13]{abrste}):
\begin{equation*}
Y_0(z)=\frac 2 \pi \left(\ln{\frac z 2}+\gamma_{\rm EM}\right)J_0(z)+ P_0(z),
\end{equation*}
where $\gamma_{\rm EM}$ is the Euler-Mascheroni constant, and
\begin{equation}
\label{besselypropk}
Y_k(z)=-\frac{2^k}{\pi z^k}\sum_{n=0}^{k-1}\frac{(k-n-1)!}{n!}\left(\frac z 2\right)^{2n}+\frac 2 \pi J_k(z)\ln{\frac z 2} -\frac{z^k}{2^k\pi} P_k(z),
\end{equation}
where $P_k(z)$ is a power series in $z^2$ for any $k\ge 0$. In particular, we use \eqref{besselypropk} to show that, for $k \geq 1$, the coefficients of $Y_k(r)$ and $r^{-k}$ in \eqref{vk} must vanish.

For $k \geq 3$, this can be easily seen due to the appearance of a singular function of order $r^{2-k}$ in \eqref{besselypropk} which can not be canceled out. 
For $k=2$, the singularity of $Y_2$ at the origin implies the condition $D=\frac{4}{\pi}B$; but since
\[ \lim_{r \to 0} v(r) = \lim_{r \to 0} B\left[Y_2(r)+\frac{4}{\pi}r^{-2}\right] = -\frac{B}{\pi},\]
it follows that $B=D=0$. Finally, for $k=1$, the behavior of $Y_1$ requires $D=\frac{2}{\pi}B$; but the presence of the function $r \mapsto J_1(r)\ln\left(\frac{r}{2}\right)$ in the asymptotic development of $Y_1$ implies that $B$ must be zero in order to guarantee the boundedness of $v'$.
Due to the previous considerations, for $k \geq 1$ it must hold
$$
v(r)=J_k(\mu r)-J_k(\mu)r^k,
$$
and the corresponding eigenvalues equal those of the full disk. For $k=0$, we obtain instead the following system
\[ \left\{\begin{array}{l}
AJ_0(\mu) + B Y_0(\mu) + C = 0 \\
A\mu J_0'(\mu) + B\mu Y_0'(\mu) + D = 0 \\
A + B \frac 2 \pi \left(\ln{\frac \mu 2} +\gamma_{\rm EM} \right) + C = 0 \\
B \frac 2 \pi   + D = 0           
\end{array} \right.\]
or, in matrix form,
\begin{equation}\label{matrice3}
\begin{pmatrix}
J_0(\mu) & Y_0(\mu) & 1 & 0 \\
\mu J_0'(\mu) & \mu Y_0'(\mu) & 0 &  1 \\
1 & \frac 2 \pi \left(\ln{\frac \mu 2} +\gamma_{\rm EM} \right) & 1 &  0 \\
0 & \frac 2 \pi & 0 &  1
\end{pmatrix}
\begin{pmatrix} A \\ B \\ C \\ D \end{pmatrix}
= \begin{pmatrix} 0 \\ 0 \\ 0\\ 0\end{pmatrix}.
\end{equation}

Once more, since a solution to \eqref{matrice3} exists only if the determinant of the matrix is zero, we compute it and, after using several properties of Bessel functions, we obtain that it equals
\begin{equation}
\label{zeroespunctured}
\frac 2 \pi \left(J_0(\mu)-2\right) +\frac {2\mu}\pi J_1(\mu)\left(\ln{\frac \mu 2}+\gamma_{\rm EM}\right)-\mu Y_1(\mu).
\end{equation}

The first nontrivial zero of \eqref{zeroespunctured} is $\mu = 6.6478167$. By the considerations above, we deduce that the first eigenvalue of $\Omega_0$ is given by
\[ \lambda_1(\Omega_0) = j_{2,1}^2 \simeq 23.3746,\]
and any associated eigenfunction has exactly two nodal domains, with a diametrical nodal line.


\section{Numerics}
\label{sec3}

In this section we analyze how the first eigenvalue changes with respect to the inner radius $a$ of the annulus $\Omega_a$.

In the following we denote by $\tau_k(a)$, for $k \in \N$, the smallest eigenvalue of \eqref{bucklingequation} in $\Omega_a$, associated with eigenfunctions of the form
\[ u(r,\theta) = v(r)e^{\pm ik\theta}.\]
It is easy to see that, for fixed $k$, the function
\[ a \in [0,1) \to \tau_k(a) \]
is monotone increasing.

For some particular values of $a$, we determined the value of $k_{\rm opt}$ such that $\tau_{k_{\rm opt}}(a)=\lambda_1(\Omega_a)$ by means of the software Mathematica\texttrademark. In Table \ref{tabellanumerica}, the index $k_{\rm max}$ denotes the maximum number of cases we had to check, and it is determined by means of the following algorithm:
\begin{itemize}
 \item[(i)] Set $k=0$, $k_{\rm opt}=0$, and compute $\tau_k(a)$. 
 \item[(ii)] Let $k_{\rm max} \in \N$ be the smallest index such that $\tau_{k_{\rm max}}(0)> \tau_{k_{\rm opt}}(a)$. Such an index is well defined, since $\tau_k(0)=j_{k+1,1}^2$, where $j_{k+1,1}$ is the smallest zero of the Bessel function $J_{k+1}$, and by monotonicity of the map $a \mapsto \tau_k(a)$.
 \item[(iii)] If $k \geq k_{\rm max}$, stop the algorithm, and return $\lambda_1(\Omega_a)=\tau_{k_{\rm opt}}(a)$.
 \item[(iv)] If $k \leq k_{\rm max} -1$, set $k=k+1$, and compute $\tau_k(a)$. If $\tau_k(a) < \tau_{k_{\rm opt}}(a)$, set $k_{\rm opt}=k$. Return to (ii).
\end{itemize}

\begin{table}[!ht] \label{tabellanumerica}
\begin{center}
\begin{tabular}{|c|c|c|c|c|c|}
\hline
$a$ & $k_{\rm max}$ & $k_{\rm opt}$ & $\sqrt{\tau_{k_{\rm opt}}(a)} = \sqrt{\lambda_1(\Omega_a)}$&  $\lambda_1(\Omega_a)|\Omega_a|$ \\
\hline
$0.00$ & $1$ & $1$ & $5.13562$&  $82.858$ \\
\hline
$0.05$ & $2$ & $1$ & $6.23824$&  $121.95$ \\
\hline
$0.10$ & $3$ & $1$ & $6.71001$&  $140.03$ \\
\hline
$0.15$ & $3$ & $2$ & $7.06409$&  $153.24$ \\
\hline
$0.20$ & $3$ & $2$ & $7.50246$& $169.76$  \\
\hline
$0.25$ & $4$ & $2$ & $8.02527$&  $189.69$ \\
\hline
$0.30$ & $4$ & $2$ & $8.63688$&  $213.26$ \\
\hline
$0.35$ & $5$ & $3$ & $9.34321$&  $240.65$ \\
\hline
$0.40$ & $6$ & $3$ & $10.0995$&  $269.17$ \\
\hline
$0.45$ & $6$ & $3$ & $11.0318$&  $304.91$ \\
\hline
$0.50$ & $7$ & $4$ & $12.1553$&  $348.13$ \\
\hline
$0.55$ & $9$ & $4$ & $13.5034$&  $399.56$ \\
\hline
$0.60$ & $10$ & $5$ & $15.2003$& $464.55$ \\
\hline
$0.65$ & $11$ & $6$ & $17.3833$&  $548.23$ \\
\hline
$0.70$ & $15$ & $7$ & $20.2830$&  $659.15$\\
\hline
$0.75$ & $19$ & $8$ & $24.3501$& $814.95$\\
\hline
$0.80$ & $24$ & $11$ & $30.4382$&  $1047.8$  \\
\hline
$0.82$ & $27$ & $12$ & $33.8219$&  $1177.3$ \\
\hline
$0.84$ & $31$ & $14$ & $38.0521$&  $1339.2$ \\
\hline
$0.86$ & $36$ & $16$ & $43.4894$&  $1547.2$ \\
\hline
$0.88$ & $43$ & $19$ & $50.7402$&  $1824.7$ \\
\hline
$0.90$ & $53$ & $23$ & $60.8901$&  $2213.1$ \\
\hline
$0.91$ & $60$ & $25$ & $67.6582$&  $2472.1$ \\
\hline
$0.92$ & $68$ & $29$ & $76.1145$&  $2795.6$ \\
\hline
$0.93$ & $78$ & $33$ & $86.9885$&  $3211.7$ \\
\hline
$0.94$ & $92$ & $39$ & $101.488$&  $3766.5$ \\
\hline
$0.95$ & $112$ & $47$ & $121.786$&  $4543.1$ \\
\hline
$0.96$ & $142$ & $59$ & $152.234$&  $5708.1$  \\
\hline
$0.97$ & $192$ & $79$ & $202.979$&  $7649.6$  \\
\hline
$0.98$ & $292$ & $119$ & $304.469$&  $11533$  \\
\hline
$0.99$ & $593$ & $239$ & $608.940$&  $23182$  \\
\hline
$0.995$ & $1198$ & $479$ & $1217.880$&  $46481$ \\
\hline
\end{tabular}
\end{center}
\caption{The first eigenvalues and related quantities for various values of $a$.}
\end{table}

We observe that, despite the roughness in estimating $k_{\rm max}$ in (ii), especially when $a$ is close to $1$, it turns out that the algorithm is not too computationally demanding. Moreover, although the index $k_{\rm max}$ grows faster than $k_{\rm opt}$, even for $a=0.995$, we still have that $k_{\rm max}\le 3k_{\rm opt}$.

Table \ref{tabellanumerica} has to be complemented with the information of the unit ball $B_1$. In that case, we have no $k_{\rm max}$, and $k_{\rm opt}=0$. As expected from \eqref{crescenza}, the value for $\sqrt{\lambda_1(B_1)}$ is
$$
\sqrt{\lambda_1(B_1)}=j_{1,1}\simeq 3.8317,
$$
the lowest of all, and moreover,
$$
\lambda_1|B_1|\simeq 12.038,
$$
again, the lowest of all. It is worth remarking that, in the case of problem \eqref{laplacian}, the well-known Faber-Krahn inequality tells us that the quantity $\lambda_1^D(\Omega)|\Omega|$ has the ball as the only minimizer (we refer to \cite[Section 3.2]{henrot} and to \cite{brascodephilippisvelichkov} where the stability issue is also considered), while a similar statement holds for the Dirichlet Bilaplacian \eqref{bilaplacian} as well (we refer to \cite{ashbaughbenguria, nadirashvili}). Unfortunately, this is still a conjecture for the buckling problem \eqref{bucklingequation}, known as Szego's conjecture. For more details on this subject, we refer the reader to \cite[Section 11.3]{henrot} and the references therein (see also the recent works \cite{abf,bcp,buoso} for shape optimization results on biharmonic eigenvalues).

In addition, the information that $\lambda_1(\Omega_a)$ is increasing in $a$ is of no surprise in view of \eqref{crescenza}, indeed it is expected to diverge to infinity for $a\to1^-$. The interesting fact is that also $\lambda_1(\Omega_a)|\Omega_a|$ is increasing in $a$, providing evidence for the validity of Szego's conjecture.

An interesting question is to determine the constants for the asymptotic behaviors

\[ k_{opt}(a) \sim \frac{c_k}{1-a}, \qquad \sqrt{\tau_{k_{\rm opt}}(a)} \sim \frac{c_\mu}{1-a} \qquad \text{as } a \to 1^-.\]

Our numerical computations support the claim that $c_k \simeq 2.38$, and $c_\mu \simeq 6.0894$, yet a conclusive argument does not seem to be easily achievable.


Moreover, we observe that, for any $a$, the value of $\mu_k(a)$ is strictly decreasing in $k$ for $1\le k\le k_{\rm opt}$, and strictly increasing for $k\ge k_{\rm opt}$. In other words, given the particular ordering of the analytical branches at $a=0$, at the other limit $a\to1^-$ they all seem to tend to switch and reverse the ordering. This can be already observed for the first five eigenvalues in Figure \ref{figura1}. This phenomenon is particularly interesting, and in open contrast to what happens in the case of the Dirichlet Bilaplacian \eqref{bilaplacian} (cf.\ \cite{coffmanduffinshaffer}).

\begin{figure}[h!]
\centering
\includegraphics[width=0.70\textwidth]{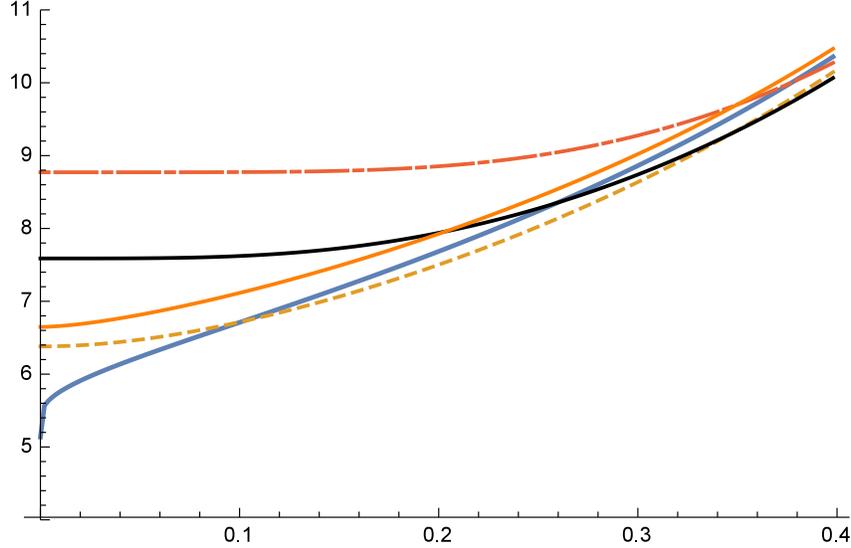}
\caption{The analytical branches of the first five eigenvalues for $a\in[0,0.4]$. 
}
\label{figura1}
\end{figure}

We conclude this section with the study of the positivity of the eigenfunctions. While all eigenfunctions associated with $k\ge 1$ are naturally sign-changing due to the angular part, it is still interesting to ask when the radial part has a definite sign. The eigenfunctions on the ball assume the form \eqref{eigenball} and it is easy to see that the eigenfunctions associated with eigenvalues $j_{k,1}^2$ have positive radial parts, using the oscillating behavior of the Bessel functions $J_k$. This behavior is not checked as easily in the case $a>0$, but still we conjecture that the eigenvalues given by the first zeroes of \eqref{zeroes0} and \eqref{zeroesk} have a positive radial part, as supported by some numerical computations we display in Figure \ref{figura2}.

\begin{figure}[h!]
\centering
\begin{tabular}{ccc}
\includegraphics[width=0.28\textwidth]{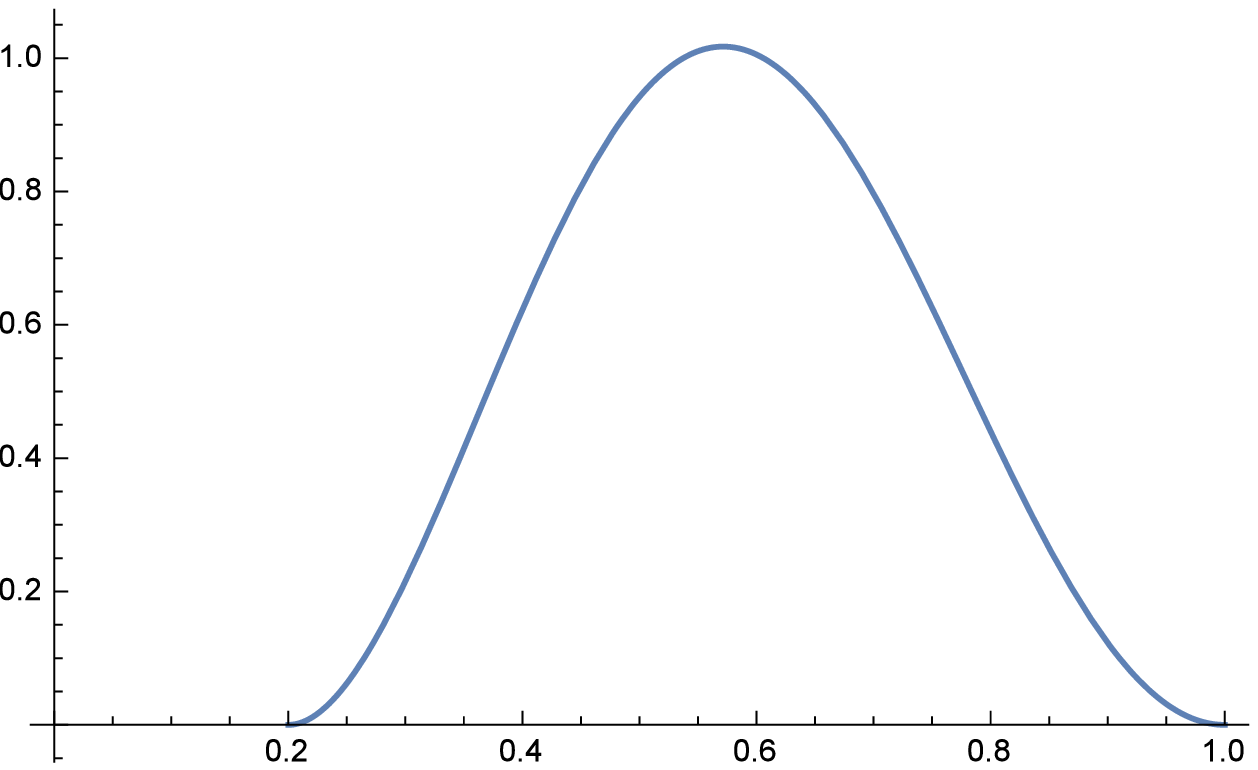} & \includegraphics[width=0.28\textwidth]{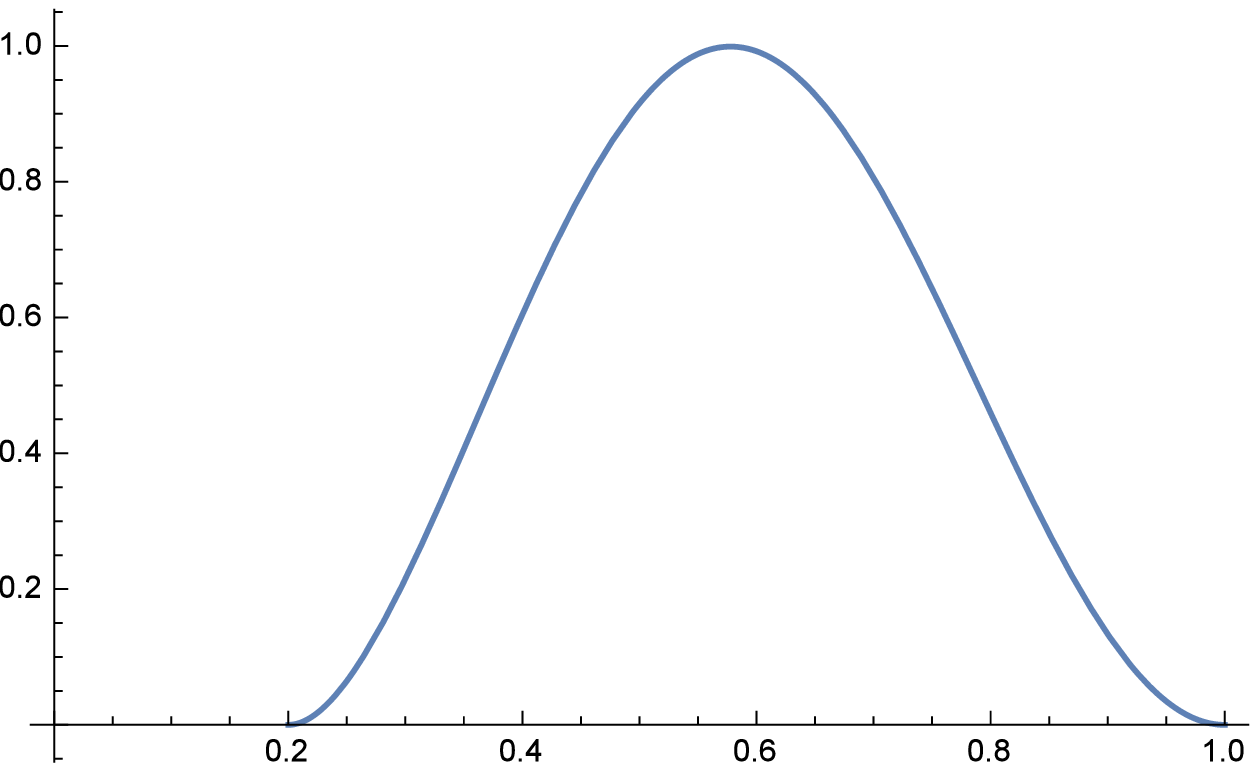} &
	\includegraphics[width=0.28\textwidth]{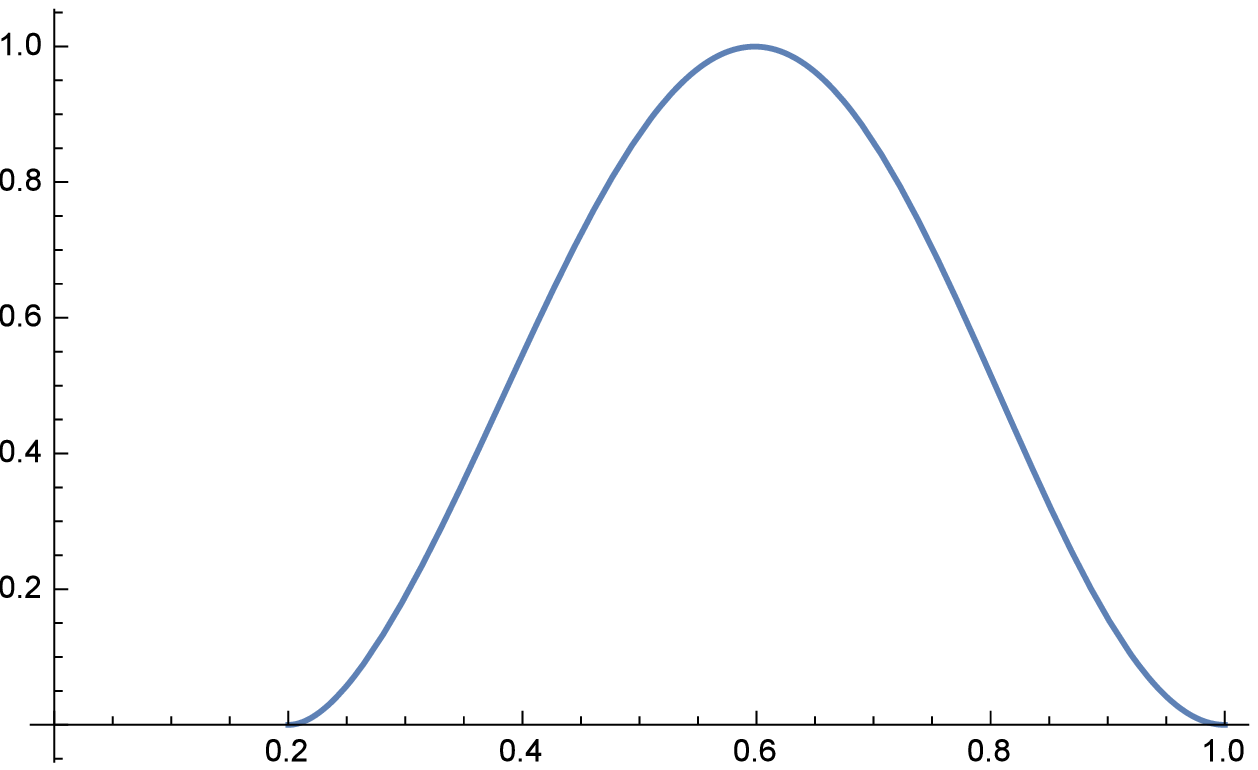}\\
	(a) $k=1$, $a=0.2$ & (b) $k=2$, $a=0.2$ & (c) $k=3$, $a=0.2$\\
\includegraphics[width=0.28\textwidth]{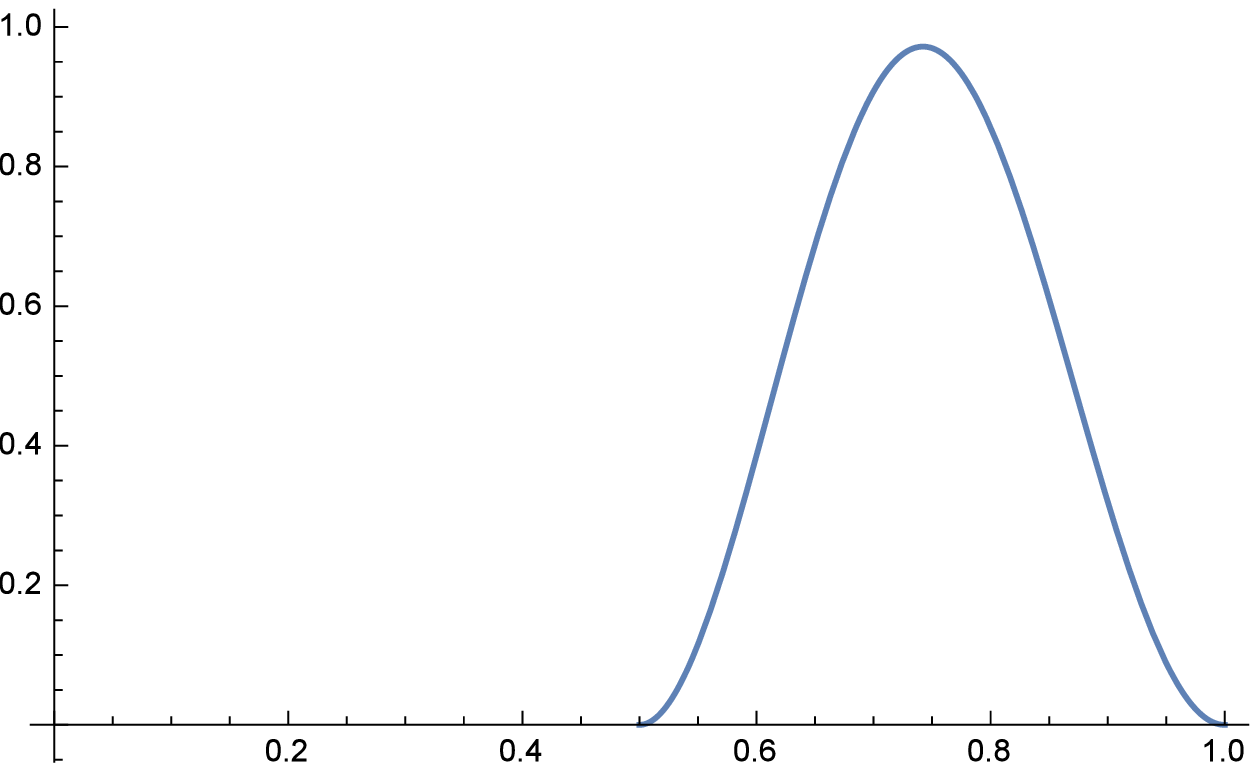} & \includegraphics[width=0.28\textwidth]{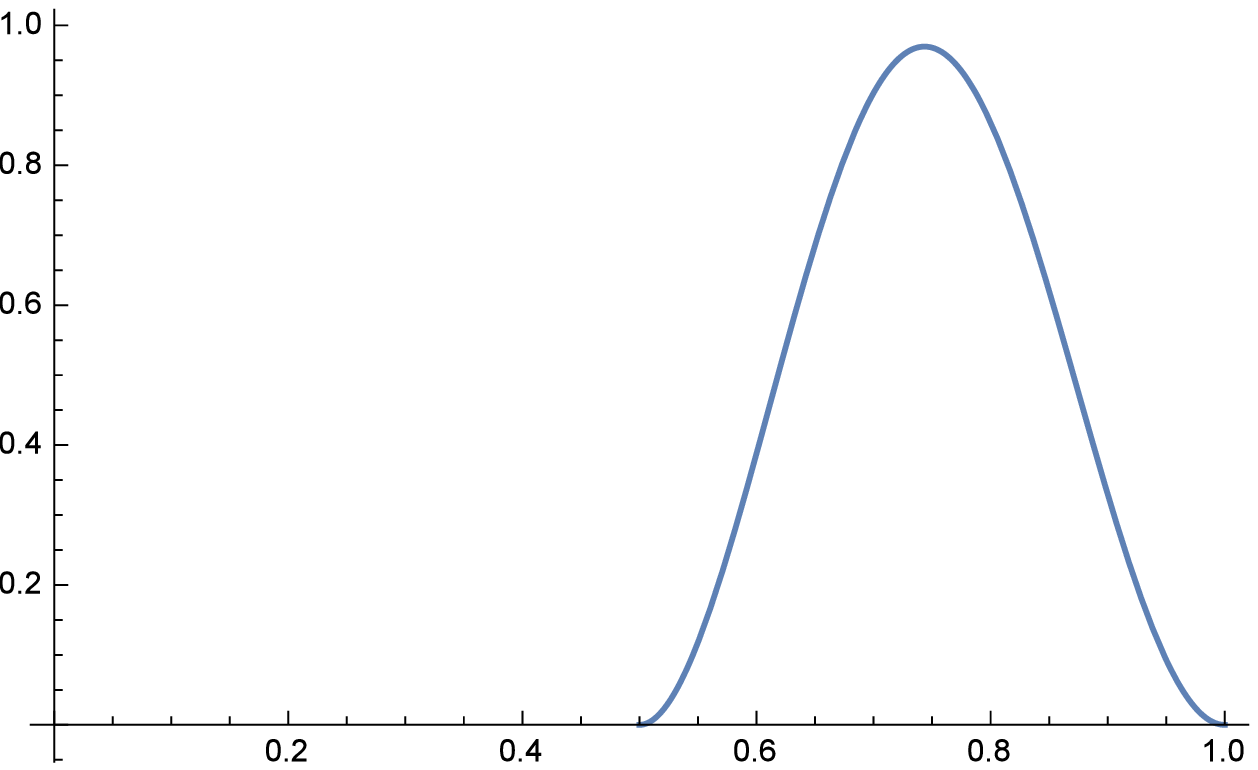} &
	\includegraphics[width=0.28\textwidth]{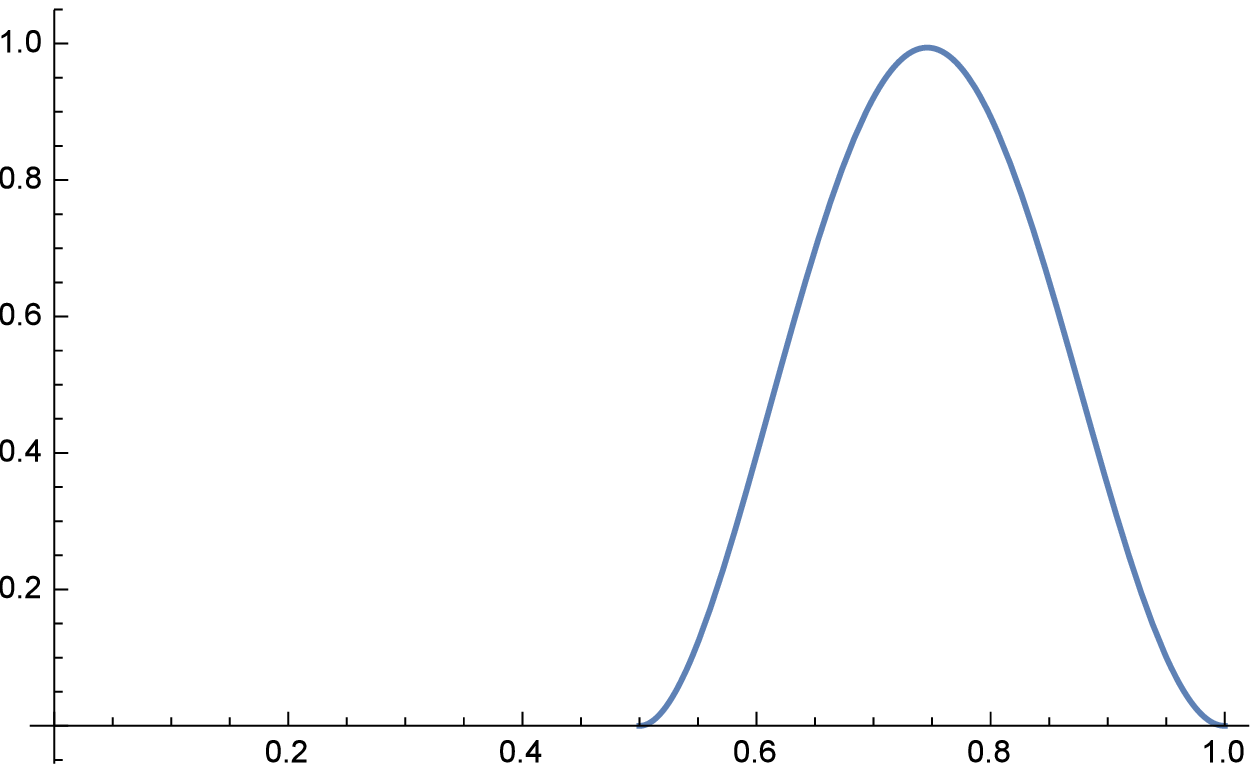}\\
	(d) $k=3$, $a=0.5$ & (e) $k=4$, $a=0.5$ & (f) $k=5$, $a=0.5$\\
\includegraphics[width=0.28\textwidth]{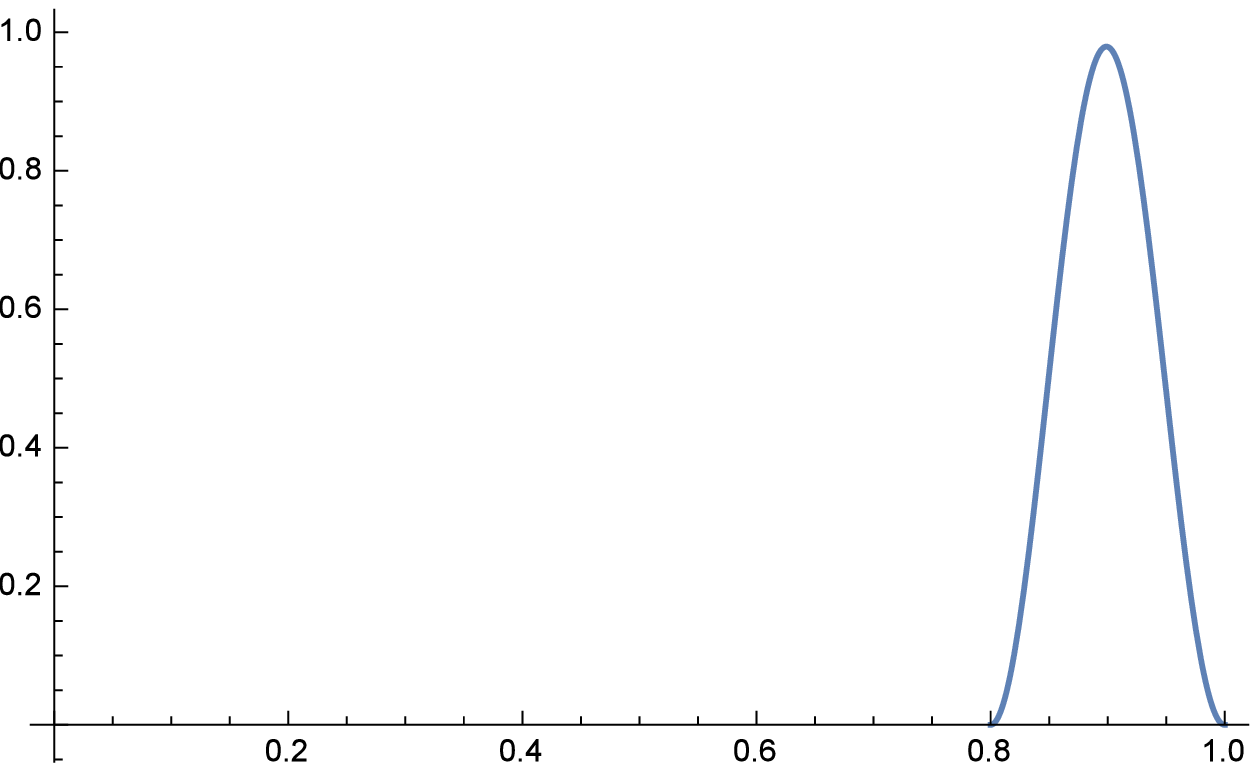} & \includegraphics[width=0.28\textwidth]{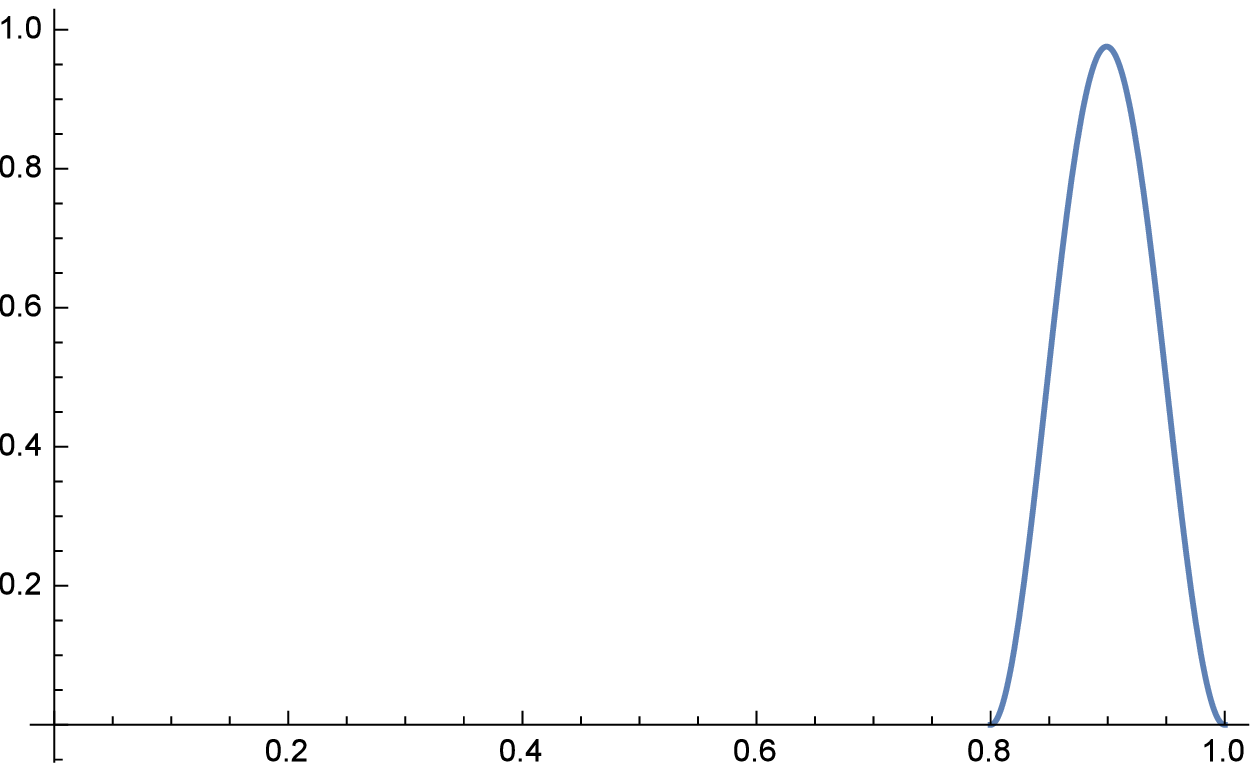} &
	\includegraphics[width=0.28\textwidth]{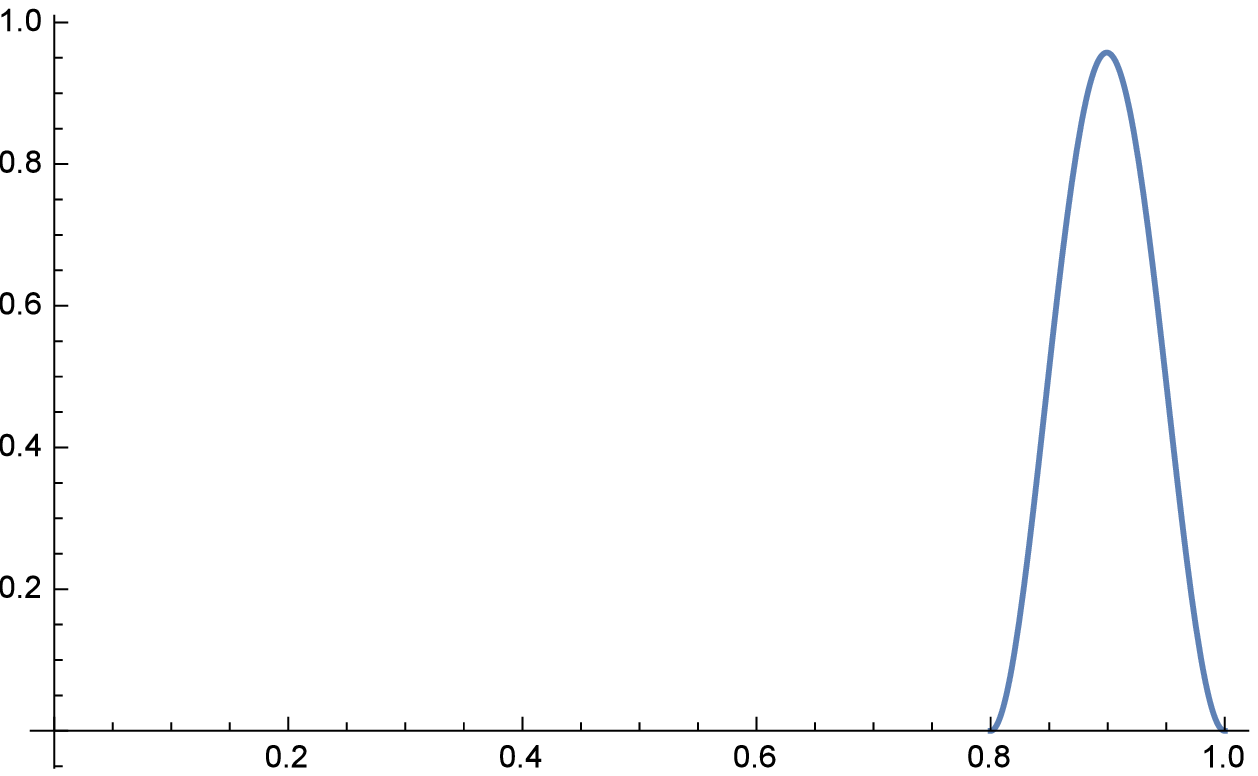}\\
	(g) $k=5$, $a=0.8$ & (h) $k=11$, $a=0.8$ & (i) $k=13$, $a=0.8$
	\end{tabular}
\caption{The radial part of the eigenfunctions corresponding to $\tau_k(a)$ for various values of $k$ and $a$.}
\label{figura2}
\end{figure}


\section{The rectangle}

Now we turn to the problem
\begin{equation}
\label{bucklingrect}
\begin{cases}
\Delta^2 u =-\lambda \Delta u, & \text{in }\Omega_\ell, \\
u(x,y)=0, & \text{on }\partial \Omega_\ell, \\
u_y(x,y)=0, & \text{on }(0,\pi)\times\{-\ell, \ell\},\\
u_{xx}(x,y)=0, & \text{on }\{0,\pi\}\times(-\ell,\ell),
\end{cases}
\end{equation}
where $\Omega_\ell = (0,\pi)\times(-\ell,\ell)$ with $\ell>0$. We recall that problem \eqref{bucklingrect} is linked with the buckling problem \eqref{bucklingequation} in an annulus $\Omega_a$ as, when $a$ approaches the limit $a\to1^-$, the nodal domains of the first eigenfunctions are annular sectors that, as the annulus shrinks, are close to rectangles and the boundary conditions to impose on the nodal domains are those in \eqref{bucklingrect}.
In order to write the solutions of \eqref{bucklingrect}, we observe that if we perform an odd reflection of the solution $u$ around the $y$-axis, then the reflected function is still in $H^2$, and we can expand it into a Fourier series of the form 
\[ u(x,y)=\sum_{m=1}^{+\infty} h_m(y)\sin{mx}.\]
By regularity theory (see e.g., \cite{ggs}), since $u$ is smooth in $\Omega_\ell$, so are the coefficients $h_m$.
Moreover, the equation reads as
\[ u_{xxxx}+2u_{xxyy}+u_{yyyy} = -\lambda(u_{xx}+u_{yy}),\]
and this leads to the following ordinary differential equation for $h_m$:
\begin{equation}
\label{eqdiffforhm}
h_m^{(iv)}(y) + (\lambda-2m^2)h_m''(y) + m^2(m^2-\lambda)h_m(y)=0.
\end{equation}
The characteristic polynomial of the equation is
\[ t^4+(\lambda-2m^2)t^2 + m^2(m^2-\lambda)=0,\]
and setting $z=t^2$, namely
\[ z^2+(\lambda-2m^2)z+m^2(m^2-\lambda)=0,\]
we see that the roots are $z_1 = m^2 - \lambda$ and $z_2=m^2$. We distinguish three cases, according to the sign of $m^2-\lambda$. In each case, it will be possible to search for even and odd eigenfunctions separately by exploiting the linearity of the differential equation and the identity 
\[ h_m(y) = \frac{h_m(y)+h_m(-y)}{2} + \frac{h_m(y)-h_m(-y)}{2}.\]

\noindent
{\bf The case $\lambda < m^2$.}
The general solution of \eqref{eqdiffforhm} is given by
\[ h_m(y)= a\cosh{my}+b\sinh{my}+c\cosh{\gamma y}+d\sinh{\gamma y},\]
where $\gamma=\sqrt{m^2-\lambda}$. Its derivative is given by
\[ h_m'(y) = am\sinh{my}+bm\cosh{my}+c\gamma\sinh{\gamma y}+d\gamma\cosh{\gamma y}.\]
Even solutions are of the form
\[ h_m(y) = a\cosh{my}+c\cosh{\gamma y},\]
and imposing the boundary conditions, we obtain the relations
\[ \begin{cases}
a\cosh{m\ell} + c\cosh{\gamma \ell} =0,  \\
am\sinh{m\ell}  +c\gamma\sinh{\gamma \ell}  = 0. \end{cases}\]
This system admits a nontrivial solution if and only if
\[\gamma \tanh{\gamma\ell} = m\tanh{m\ell}.\]
However, since the function $t \mapsto t\tanh{t\ell}$ is strictly increasing, we obtain $\gamma=m$ and hence $\lambda=0$, a contradiction.
If we look for odd solutions of the form
\[ h_m(y)= b\sinh{my}+d\sinh{\gamma y},\]
we obtain the relations
\[ \begin{cases}
b\sinh{m\ell}  + d\sinh{\gamma \ell}  =0,  \\
bm\cosh{m\ell}  +d\gamma\cosh{\gamma \ell} = 0. \end{cases}\]
This system admits a nontrivial solution if and only if
\[\frac{\tanh{\gamma\ell}}{\gamma} = \frac{\tanh{m\ell}}{m}.\]
However, since the function $t \mapsto \frac{\tanh{t\ell}}{t}$ is strictly decreasing, we obtain $\gamma=m$ and hence $\lambda=0$, again a contradiction.
In conclusion, no eigenfunction exists in the case $\lambda < m^2$.\\

\noindent
{\bf The case $\lambda = m^2$.}
The general solution of \eqref{eqdiffforhm} is given by
\[ h_m(y)= a\cosh{my}+b\sinh{my}+cy+d,\]
and its derivative by
\[ h_m'(y) = am\sinh{my}+bm\cosh{my}+c.\]
Even eigenfunctions are of the form
\[ h_m(y)= a\cosh{my}+d,\]
and imposing the boundary conditions we obtain $am\sinh{m\ell}=0$, which implies $a=0$ and hence $d=0$.
Odd eigenfunctions are of the form
\[ h_m(y) = b \sinh{my} + cy,\]
and imposing the boundary conditions leads to the system
\[ \begin{cases}
b\sinh{m\ell}  + c\ell  =0,  \\
bm\cosh{m\ell}  +c  = 0. 
\end{cases}\]
The system admits a nontrivial solution if and only if
\[ \sinh{m\ell} - m\ell \cosh{m\ell}=0. \]
However, the function $m \mapsto \sinh{m\ell} - m\ell \cosh{m\ell}$ is strictly decreasing, and is equal to zero if and only if $m=0$, a contradiction. As a consequence, no eigenfunction exists in the case $\lambda=m^2$.\\

\noindent
{\bf The case $\lambda > m^2$.}
This case is the most involved. The general solution of \eqref{eqdiffforhm} is given by
\[ h_m(y)= a\cosh{my}+b\sinh{my}+c\cos{\gamma y}+d\sin{\gamma y},\]
where $\gamma=\sqrt{\lambda - m^2}$, and its derivative is given by
\[ h_m'(y) = am\sinh{my}+bm\cosh{my}-c\gamma\sin{\gamma y}+d\gamma\cos{\gamma y}.\]
Even solutions are of the form
\[ h_m(y)= a\cosh{my}+c\cos{\gamma y},\]
and imposing the boundary conditions, we obtain the relations
\[ \begin{cases}
a\cosh{m\ell}  + c\cos{\gamma \ell}  =0,  \\
am\sinh{m\ell}  -c\gamma\sin{\gamma \ell}  = 0.
\end{cases}\]
This system admits a nontrivial solution if and only if
\[\gamma\tan{\gamma\ell} = -m\tanh{m\ell}.\]
Let $m>0$ be fixed. On each interval of the form $(t_1,t_2):=\displaystyle \left(\frac{1}{\ell}\left(\frac{\pi}{2}+k\pi\right),\frac{1}{\ell}\left(\pi+k\pi\right)\right)$, the function $f$ defined as $f(t):= t\tan{\ell t}$ is negative, strictly increasing, and satisfies $\lim_{t \to t_1} f(t)=-\infty$, $f(t_2)=0$. Therefore, for each $k \in \N$ there exists a unique $\gamma^{(k),even}_m\in \displaystyle \left(\frac{1}{\ell}\left(\frac{\pi}{2}+k\pi\right),\frac{1}{\ell}\left(\pi+k\pi\right)\right)$ such that, for $\gamma=\gamma^{(k),even}_m$, the function
\[ h_m(y)= \cosh{my}-\frac{\cosh{m\ell}}{\cos{\gamma\ell}}\cos{\gamma y}\]
is an even, nontrivial solution of \eqref{eqdiffforhm}. Observe that for $\gamma \in \displaystyle \left(\frac{1}{\ell}\left(k\pi\right),\frac{1}{\ell}\left(\frac{\pi}{2}+k\pi\right)\right)$, where $k \in \N$, no nontrivial even solutions can exist.

Let us now look for odd solutions of the form
\[ h_m(y)= b\sinh{my}+d\sin{\gamma y}.\]
Imposing the boundary conditions, we obtain the relations
\[ \begin{cases}
b\sinh{m\ell}  + d\sin{\gamma \ell}  =0,  \\
bm\cosh{m\ell}  +d\gamma\cos{\gamma \ell}  = 0.
\end{cases}\]
This system admits a nontrivial solution if and only if
\[\frac{\tan{\gamma\ell}}{\gamma} = \frac{\tanh{m\ell}}{m}.\]
Let $m>0$ be fixed. On each interval of the form $(t_1,t_2):=\displaystyle \left(\frac{1}{\ell}\left(\pi+k\pi\right),\frac{1}{\ell}\left(\frac{3}{2}\pi+k\pi\right)\right)$, the function $f$ defined as $f(t):= \frac{\tan{\ell t}}{t}$ is positive, strictly increasing (where defined), and satisfies $f(t_1)=0$, $\lim_{t \to t_2} f(t)=+\infty$. Therefore, for each $k \in \N$ there exists a unique $\gamma^{(k),odd}_m\in \displaystyle \left(\frac{1}{\ell}\left(\pi+k\pi\right),\frac{1}{\ell}\left(\frac{3}{2}\pi+k\pi\right)\right)$ such that, for $\gamma=\gamma^{(k),odd}_m$, the function
\[ h_m(y)= \sinh{my}-\frac{\sinh{m\ell}}{\sin{\gamma\ell}}\sin{\gamma y}\]
is an odd, nontrivial solution of \eqref{eqdiffforhm}. Observe that for $\gamma \in \displaystyle \left(\frac{1}{\ell}\left(\frac{\pi}{2}+k\pi\right),\frac{1}{\ell}\left(\pi+k\pi\right)\right)$, where $k \in \N$, no nontrivial odd solutions can exist. Moreover, if $\gamma \in \displaystyle \left(0, \frac{\pi}{2\ell} \right)$, one has
\[ \frac{\tanh{m\ell}}{m}< 1 < \frac{\tan{\gamma\ell}}{\ell},\]
and therefore no nontrivial odd solution can exist in this case.

Let us study more closely the function
\[ h_m(y)= \cosh{my}- \frac{\cosh{m\ell}}{\cos{\gamma\ell}}\cos{\gamma y}\]
when $\gamma \in \displaystyle \left(\frac{\pi}{2\ell},\frac{\pi}{\ell}\right)$. We want to prove that $h_m \geq 0$. Notice that $\cos{\gamma \ell}<0$, therefore if $y \in \left[0,\frac{\pi}{2\gamma}\right]$, we clearly have $h_m(y)>0$. If $y \in \displaystyle \left(\frac{\pi}{2\gamma},\ell\right]$, it is enough to show that
\[ \frac{\cosh{my}}{\cos{\gamma y}} \leq \frac{\cosh{m \ell}}{\cos{\gamma \ell}},\]
since $\cos{\gamma y}<0$. Let us consider the function $f$ defined as
\[ f(t) := \frac{\cosh{mt}}{\cos{\gamma t}}.\]
We have
\[ f'(t) = \frac{m\sinh{mt}\cos{\gamma t} + \gamma\cosh{mt}\sin{\gamma t}}{(\cos{\gamma t})^2}.\]
Hence it is enough to prove that
\[ g(t) := m \tanh{mt} + \gamma\tan{\gamma t} \leq 0\] on $\displaystyle\left(\frac{\pi}{2\gamma}\ell,\ell\right]$. On this interval $g$ is increasing, and $g(\ell)=0$. Therefore, $g(t)\leq 0$, and $f'(t)\geq 0$, which implies the claim.

We can summarize the previous findings in the following

\begin{thm}
\label{efrect}
The problem
\[ \begin{cases}
\Delta^2 u=-\lambda \Delta u, & \text{in }\Omega, \\
u(x,y)=0, & \text{for }(x,y)\in\partial \Omega, \\
u_y(x,y)=0, & \text{for }y \in\{-\ell, \ell\}, \\
u_{xx}(x,y)=0, & \text{for }x \in \{0,\pi\}.
\end{cases}\]
admits a sequence of eigenfunctions $u_{k,m}$, $k$, $m \in \N^*$, of the form
\[ u_{k,m}(x,y) = h_{k,m}(y)\sin{mx},\]
associated to the eigenvalues $\lambda_{k,m}=m^2+\gamma_{k,m}^2$, respectively, where
\[ \gamma_{k,m} \in \left(\frac{k\pi}{2\ell},\frac{(k+1)\pi}{2\ell} \right).\]
The function $h_{k,m}$ is a solution of the problem
\[ \begin{cases}
h^{(iv)}(y) + (\lambda-2m^2)h''(y) + m^2(m^2-\lambda)h(y)  =  0, \\ 
h(-\ell) = h(\ell)  =  0, \\ 
h'(-\ell) = h'(\ell)  =  0,
\end{cases}\]
for $\lambda=\lambda_{k,m}$. Moreover:
\begin{enumerate}
\item[(a)] if $k$ is odd, then $h_{k,m}$ is even, and if $k$ is even, then $h_{k,m}$ is odd.
\item[(b)] If $k=1$, then $h_{k,m}$ is strictly positive.
\end{enumerate}
\end{thm}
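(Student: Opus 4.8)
The plan is to assemble the theorem from the case-by-case analysis carried out above, which has already done the analytic heavy lifting; what remains is essentially a reindexing argument together with the verification of (a) and (b). First I would recall that the Fourier expansion $u(x,y)=\sum_m h_m(y)\sin mx$ reduces the eigenvalue problem to the fourth-order ODE \eqref{eqdiffforhm} for each coefficient $h_m$, and that the cases $\lambda\le m^2$ were shown to admit only the trivial solution. Hence every eigenfunction with a nonzero $m$-th Fourier mode forces $\lambda>m^2$, and I set $\gamma=\sqrt{\lambda-m^2}$ as before, so that $\lambda=m^2+\gamma^2$.

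The core step is to merge the even and odd solution families into a single sequence. From the even-solution analysis, nontrivial even $h_m$ occur precisely for $\gamma\in\bigl(\frac{1}{\ell}(\frac{\pi}{2}+k\pi),\frac{1}{\ell}(\pi+k\pi)\bigr)$, one value per $k\in\N$; rewriting the endpoints as multiples of $\frac{\pi}{2\ell}$, these are the intervals $\bigl(\frac{(2k+1)\pi}{2\ell},\frac{(2k+2)\pi}{2\ell}\bigr)$, i.e. those sitting between an odd and the next even multiple of $\frac{\pi}{2\ell}$. Likewise the odd solutions occupy $\bigl(\frac{(2k+2)\pi}{2\ell},\frac{(2k+3)\pi}{2\ell}\bigr)$. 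Since it was also shown that no even solution lies in the complementary half-intervals and no odd solution lies either in those half-intervals or in $\bigl(0,\frac{\pi}{2\ell}\bigr)$, I conclude that each interval $\bigl(\frac{k\pi}{2\ell},\frac{(k+1)\pi}{2\ell}\bigr)$ with $k\ge1$ contains exactly one admissible $\gamma$, carrying an even solution when $k$ is odd and an odd solution when $k$ is even, while the interval $k=0$ is empty. Relabelling this value $\gamma_{k,m}$ and setting $\lambda_{k,m}=m^2+\gamma_{k,m}^2$ produces the asserted family and establishes (a).

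For (b), I note that $k=1$ corresponds to $\gamma_{1,m}\in\bigl(\frac{\pi}{2\ell},\frac{\pi}{\ell}\bigr)$ together with the even profile $h_{1,m}(y)=\cosh my-\frac{\cosh m\ell}{\cos\gamma\ell}\cos\gamma y$, which is exactly the configuration for which $h_{1,m}\ge0$ was already proved via the monotonicity of $t\mapsto\frac{\cosh mt}{\cos\gamma t}$ on $\bigl(\frac{\pi}{2\gamma},\ell\bigr]$. To upgrade this to strict positivity on $(-\ell,\ell)$ I would observe that $h_{1,m}>0$ on $\bigl[0,\frac{\pi}{2\gamma}\bigr]$ directly (both terms being positive there, since $\cos\gamma\ell<0$), and that on $\bigl(\frac{\pi}{2\gamma},\ell\bigr)$ the auxiliary function $g(t)=m\tanh mt+\gamma\tan\gamma t$ is strictly negative in the open interval (it is increasing and vanishes only at the endpoint $\ell$), so $f(t)=\frac{\cosh mt}{\cos\gamma t}$ is strictly increasing and $f(y)<f(\ell)$ yields $h_{1,m}(y)>0$ for $y<\ell$; evenness then gives strict positivity on all of $(-\ell,\ell)$.

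The only genuinely delicate point is the reindexing bookkeeping: one must be sure that the even- and odd-solution intervals tile the positive $\gamma$-axis in alternating fashion with no gaps and no double coverage, which rests entirely on the ``forbidden interval'' observations recorded during the case $\lambda>m^2$. I expect this to be the main obstacle to stating the result cleanly, since it is easy to misalign the half-integer shifts in the endpoints; once the tiling is pinned down, (a) is immediate and (b) is a direct appeal to the positivity computation already in hand.
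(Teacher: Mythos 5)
Your proposal is correct and follows essentially the same route as the paper, whose ``proof'' of Theorem \ref{efrect} is precisely the preceding case analysis ($\lambda<m^2$, $\lambda=m^2$, $\lambda>m^2$ with the even/odd interval tiling) summarized; your reindexing of the intervals $\bigl(\frac{1}{\ell}(\frac{\pi}{2}+k\pi),\frac{1}{\ell}(\pi+k\pi)\bigr)$ and $\bigl(\frac{1}{\ell}(\pi+k\pi),\frac{1}{\ell}(\frac{3}{2}\pi+k\pi)\bigr)$ into the single family $\bigl(\frac{k\pi}{2\ell},\frac{(k+1)\pi}{2\ell}\bigr)$ matches the statement, including the exclusion of $k=0$. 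Your upgrade from $h_{1,m}\ge 0$ to strict positivity on $(-\ell,\ell)$ (via $g<0$ on the open interval, hence $f$ strictly increasing) is a small but genuine improvement, since the paper only explicitly argues the non-strict inequality while the theorem asserts strictness.
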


We now study the dependence of the first eigenvalue $\lambda_1(\Omega_\ell)$ upon the parameter $\ell$. By Theorem \ref{efrect}, it is clear that $\lambda_1(\Omega_\ell) = \lambda_{1,m}$ for some $m \in \N^*$. Therefore, it is essential to analyze the function defined on $(0,+\infty)$ as
\[ m \mapsto \lambda_{1,m} = m^2+\gamma_{1,m}^2.\]
Without loss of generality, since we can recover any case by scaling, from now on we will consider $\ell=1$.

\begin{lem}
The function $\Phi:(0,+\infty) \to \left(\frac{\pi}{2},\pi\right)$  defined as $\Phi(m)= \gamma_{1,m}$ is of class $C^\infty$, strictly decreasing, and satisfies
\[ \lim_{m \to 0^+} \Phi(m) = \pi, \qquad \lim_{m \to +\infty} \Phi(m) = \frac{\pi}{2}.\]
\end{lem}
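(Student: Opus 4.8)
The plan is to realize $\Phi$ as the branch of an implicitly defined root and to read off all four claims from the sign of two partial derivatives. Since $k=1$ is odd, Theorem~\ref{efrect}(a) tells us that $h_{1,m}$ is even, so $\Phi(m)=\gamma_{1,m}$ is the unique root $\gamma\in\left(\frac{\pi}{2},\pi\right)$ of the even characteristic equation which, for $\ell=1$, reads $\gamma\tan\gamma=-m\tanh m$. Accordingly I set $F(m,\gamma):=\gamma\tan\gamma+m\tanh m$, so that $F(m,\Phi(m))=0$. The discussion preceding the theorem already establishes that $f(\gamma):=\gamma\tan\gamma$ is strictly increasing on $\left(\frac{\pi}{2},\pi\right)$; hence $\partial_\gamma F(m,\gamma)=f'(\gamma)>0$ throughout that interval, and in particular it never vanishes at a root. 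The implicit function theorem then applies at every point of the graph of $\Phi$ and, since $F$ is real-analytic in both variables away from the poles of $\tan$, yields that $\Phi$ is of class $C^\infty$ (indeed real-analytic).

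For the monotonicity I would differentiate the relation $F(m,\Phi(m))=0$ implicitly, giving $\Phi'(m)=-\partial_m F/\partial_\gamma F$. Here $\partial_m F(m,\gamma)=\tanh m+\dfrac{m}{\cosh^2 m}$, which is strictly positive for every $m>0$, while $\partial_\gamma F>0$ as above; therefore $\Phi'(m)<0$ on $(0,+\infty)$, i.e.\ $\Phi$ is strictly decreasing.

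It remains to compute the two limits. I would track the scalar $g(m):=m\tanh m$, which is continuous and strictly increasing with $g(0^+)=0$ and $g(m)\to+\infty$ as $m\to+\infty$, and use the identity $f(\Phi(m))=-g(m)$. As $m\to 0^+$ the right-hand side tends to $0^-$; since $f$ increases on $\left(\frac{\pi}{2},\pi\right)$ up to $f(\pi^-)=0$, uniqueness of the root forces $\Phi(m)\to\pi$. As $m\to+\infty$ the right-hand side tends to $-\infty$; since $f(\gamma)\to-\infty$ only as $\gamma\to\frac{\pi}{2}^+$, the same uniqueness forces $\Phi(m)\to\frac{\pi}{2}$. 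This also confirms that $\Phi$ indeed maps into $\left(\frac{\pi}{2},\pi\right)$.

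The argument is essentially routine, and there is no serious obstacle: everything hinges on the strict positivity of $\partial_\gamma F$, which is exactly the strict monotonicity of $\gamma\tan\gamma$ recorded before the theorem. The only point requiring a little care is that the limits be taken within the correct analytic branch, but this is guaranteed by the uniqueness of the root of the characteristic equation in $\left(\frac{\pi}{2},\pi\right)$ for each fixed $m$.
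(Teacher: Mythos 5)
Your proof is correct and follows essentially the same route as the paper: both arguments rest on the strict monotonicity of $f(\gamma)=\gamma\tan\gamma$ on $\left(\frac{\pi}{2},\pi\right)$ and of $g(m)=m\tanh m$, the paper writing $\Phi$ as a composition with $f^{-1}$ while you phrase it via the implicit function theorem for $F(m,\gamma)=f(\gamma)+g(m)$. If anything, your version is slightly more careful: you keep track of the minus sign in $f(\Phi(m))=-g(m)$ (which the paper's displayed formulas silently drop) and you actually verify the two limits, which the paper's proof omits.
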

\begin{proof}
Let us define the functions $f$ and $g$ as $f(\gamma)= \gamma \tan{\gamma}$ and $g(m)=m\tanh{m}$. It holds
\[ f'(\gamma) = \tan{\gamma} + \frac{\gamma}{\cos^2{\gamma}}.\]
It is clear that $f'(\gamma) > 0$ on $\left(\frac{\pi}{2},\pi\right)$, and therefore $f$ admits a reciprocal function $f^{-1}$ of class $C^\infty$. Observing that $\Phi(m)=f^{-1}(g(m))$, $\Phi$ is then of class $C^\infty$. Moreover, $g'(m) = \tanh{m} + \frac{m}{\cosh^2{m}} > 0$ for every $m>0$. Therefore,
\[\Phi'(m) = (f^{-1})'(g(m)) \cdot g'(m) = \frac{g'(m)}{f'(f^{-1}(g(m))} < 0.\]
\end{proof}

\begin{thm}
There exists $m^* \in (0,+\infty)$ such that $\lambda_{1,m^*} = \min_{m > 0} \lambda_{1,m}$. 
\end{thm}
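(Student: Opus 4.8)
The plan is to reduce the theorem to a coercivity-and-continuity argument, the only subtlety being that the natural left endpoint $m\to 0^+$ is excluded from the domain. Write
\[ \psi(m) := \lambda_{1,m} = m^2 + \Phi(m)^2, \qquad m\in(0,+\infty),\]
where $\Phi(m)=\gamma_{1,m}$ is the function studied in the preceding lemma. Since $\Phi$ is of class $C^\infty$, so is $\psi$; in particular it is continuous. From the limits $\Phi(m)\to\pi$ as $m\to0^+$ and $\Phi(m)\to\tfrac\pi2$ as $m\to+\infty$ established there, together with the boundedness of $\Phi$, I read off the boundary behaviour
\[ \lim_{m\to 0^+}\psi(m) = \pi^2, \qquad \lim_{m\to+\infty}\psi(m) = +\infty.\]
The second limit is the coercivity confining any minimizing sequence to a bounded set; the first shows that the only way a minimizer could fail to exist is if the infimum equalled the value $\pi^2$, which is approached solely in the excluded limit $m\to0^+$.

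Everything thus reduces to exhibiting a single $m_0>0$ with $\psi(m_0)<\pi^2$. Granting this, I would use the two limits to choose numbers $0<\delta<m_0<M$ such that $\psi(m)>\psi(m_0)$ for every $m\in(0,\delta]\cup[M,+\infty)$. On the compact interval $[\delta,M]$ the continuous function $\psi$ attains its minimum at some $m^*$, and since $\psi(m^*)\le\psi(m_0)$ undercuts all values outside $[\delta,M]$, this $m^*$ is a global minimizer over $(0,+\infty)$, which is the assertion of the theorem.

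It remains to produce such an $m_0$, for which I would show that $\psi$ is strictly decreasing for small $m$. Differentiating gives $\psi'(m)=2m+2\Phi(m)\Phi'(m)$. As $k=1$ corresponds to an even eigenfunction (Theorem \ref{efrect}(a)), $\Phi$ satisfies the even-case relation $\Phi(m)\tan\Phi(m)=-m\tanh m$ (taking $\ell=1$). Differentiating this identity and writing $f(\gamma)=\gamma\tan\gamma$, $g(m)=m\tanh m$ yields $f'(\Phi(m))\Phi'(m)=-g'(m)$, hence
\[ \psi'(m) = 2m - \frac{2\,\Phi(m)\,g'(m)}{f'(\Phi(m))}.\]
As $m\to0^+$ one has $\Phi(m)\to\pi$, $f'(\Phi(m))\to f'(\pi)=\pi$, and $g'(m)=\tanh m+m\,\mathrm{sech}^2 m=2m+O(m^3)$, so that $\psi'(m)=2m-4m+o(m)=-2m+o(m)<0$ for all sufficiently small $m>0$. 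Consequently $\psi(m_0)<\lim_{m\to0^+}\psi(m)=\pi^2$ for small $m_0>0$, supplying exactly the point required above.

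The one genuinely delicate point is this last step: both terms of $\psi'$ vanish at $m=0$, so the strict initial decrease hinges on comparing their rates, namely on the cancellation $2m-4m$ produced by the precise constants $g'(m)\sim 2m$ and $f'(\pi)=\pi$. Equivalently, one is comparing the expansions $\Phi(m)=\pi-\tfrac{m^2}{\pi}+o(m^2)$ and $\sqrt{\pi^2-m^2}=\pi-\tfrac{m^2}{2\pi}+o(m^2)$, and it is the faster decay of $\Phi$ that forces $\psi$ below $\pi^2$. Once this quantitative comparison is in hand, the remainder is a routine Weierstrass-plus-coercivity argument.
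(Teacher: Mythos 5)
Your proposal is correct, and its overall skeleton coincides with the paper's: compute $\lim_{m\to 0^+}\lambda_{1,m}=\pi^2$ and $\lim_{m\to+\infty}\lambda_{1,m}=+\infty$, extend by continuity, invoke Weierstrass on a compact interval, and then rule out the degenerate endpoint by exhibiting a value of $\psi$ strictly below $\pi^2$. The two arguments diverge only in that last step, and there the difference is genuine. The paper simply evaluates at $m=1$: a direct (numerical) computation gives $\gamma_{1,1}\simeq 2.8833$, hence $\lambda_{1,1}\simeq 9.3134<\pi^2\simeq 9.8696$, so the minimizer cannot sit at $m=0$. You instead prove that $\psi$ is strictly decreasing near $0$ by differentiating the implicit relation $\Phi(m)\tan\Phi(m)=-m\tanh m$ and extracting the cancellation $\psi'(m)=2m-2\Phi(m)g'(m)/f'(\Phi(m))=2m-4m+o(m)<0$, using $f'(\pi)=\pi$ and $g'(m)=2m+O(m^3)$; your expansions check out (and, incidentally, you handle the sign in $\Phi=f^{-1}(-g(m))$ more carefully than the paper's lemma, which writes $\Phi=f^{-1}(g(m))$ yet still concludes $\Phi'<0$). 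What each approach buys: the paper's is shorter but rests on a numerical evaluation of a root of a transcendental equation, whereas yours is entirely analytic and self-contained, at the cost of the slightly delicate rate comparison you rightly flag. Your version also yields the extra qualitative information that the minimum is approached with $\psi<\pi^2$ already for all small $m>0$, not just at $m=1$.
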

\begin{proof}
It holds $\lim_{m \to 0^+} \lambda_{1,m} = \pi^2$ and $\lim_{m \to +\infty} \lambda_{1,m} = +\infty$. Since the function can be extended to a continuous function on $[0,+\infty)$, there exists $m^* \in [0,+\infty)$ such that $\lambda_{1,m^*} = \min_{m \geq 0} \lambda_{1,m}$. By direct computation, it holds $\gamma_{1,1} \simeq 2.8833,$ and hence $\lambda_{1,1} \simeq 9.3134 < \pi^2 \simeq 9.8696$. Therefore, $m^* > 0$ and the claim is proved.
\end{proof}

We observe that $\lambda_{1,m}$ is an eigenvalue if and only if $m \in \N^*$. If $\lambda_{1,m}$ is an eigenvalue for $\Omega_\ell$ then, by scaling, $\frac{1}{\varepsilon^2}\lambda_{1,\frac{m}{\varepsilon}}$ is an eigenvalue for $\Omega_{\varepsilon \ell}$, provided $\frac{m}{\varepsilon} \in \N^*$. This leads to the following proposition.

\begin{thm}
\label{crescendo}
Let $\lambda_1(\Omega_\ell)$ be the first eigenvalue in $\Omega_\ell := (0,\pi)\times (-\ell,\ell)$. Then there exists a sequence $\ell_k \to 0$ such that there exists an eigenfunction associated to $\lambda_1(\Omega_{\ell_k})$ with $k$ nodal domains.
\end{thm}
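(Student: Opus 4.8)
The plan is to reduce everything to the one-variable minimization already set up for $\ell=1$ and then tune $\ell$ so that the optimal Fourier index in $x$ is exactly $k$. First I would record, as noted just before the statement, that for each fixed $x$-frequency $m\in\N^*$ the smallest eigenvalue of \eqref{bucklingrect} on that branch is $\lambda_{1,m}(\Omega_\ell)=m^2+\gamma_{1,m}^2$, since by Theorem \ref{efrect} the index $k=1$ gives the smallest admissible $\gamma$ (namely $\gamma_{1,m}\in(\pi/(2\ell),\pi/\ell)$), and hence the smallest $\lambda$. Therefore
\[ \lambda_1(\Omega_\ell)=\min_{m\in\N^*}\lambda_{1,m}(\Omega_\ell). \]
A corresponding eigenfunction is $u_{1,m}(x,y)=h_{1,m}(y)\sin(mx)$, and crucially $h_{1,m}$ is strictly positive by Theorem \ref{efrect}(b) (the first index being $1$). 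Consequently $u_{1,m}$ vanishes in $\Omega_\ell$ exactly on the vertical segments $\{x=j\pi/m\}$, $j=1,\dots,m-1$, and so has precisely $m$ nodal domains. Thus the whole problem becomes: for each $k$, exhibit a value $\ell_k\to 0$ for which the minimizing frequency is $m=k$.

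To do this I would make the $\ell$-dependence explicit. Writing the even-branch condition $\gamma\tan(\gamma\ell)=-m\tanh(m\ell)$ and substituting $G=\gamma\ell$, $M=m\ell$ turns it into the $\ell=1$ equation $G\tan G=-M\tanh M$; since the relevant root lies in $(\pi/2,\pi)$, this yields $\gamma_{1,m}(\Omega_\ell)=\ell^{-1}\Phi(m\ell)$, with $\Phi$ the function of the preceding lemma. Setting $F(p):=p^2+\Phi(p)^2$ (so that $F(m)=\lambda_{1,m}$ when $\ell=1$), one obtains the scaling identity
\[ \lambda_{1,m}(\Omega_\ell)=m^2+\frac{\Phi(m\ell)^2}{\ell^2}=\frac{1}{\ell^2}F(m\ell), \]
which is precisely the scaling remark preceding the statement. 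By the previous theorem, $F$ attains its minimum over $(0,+\infty)$ at some $m^*>0$.

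Now I would simply put $\ell_k:=m^*/k$. Then $\lambda_{1,m}(\Omega_{\ell_k})=\ell_k^{-2}F(m\,m^*/k)$, and at $m=k$ the argument equals $m^*$, the global minimizer of $F$; hence the minimum over $m\in\N^*$ is attained at $m=k$, i.e.\ $\lambda_1(\Omega_{\ell_k})=\ell_k^{-2}F(m^*)=\lambda_{1,k}(\Omega_{\ell_k})$. The eigenfunction $u_{1,k}(x,y)=h_{1,k}(y)\sin(kx)$ is then a first eigenfunction and, by the first paragraph, has exactly $k$ nodal domains. Since $\ell_k=m^*/k\to 0$, this produces the desired sequence.

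The only genuinely delicate point, and the step I would justify most carefully, is fixing the minimizing index exactly at $m=k$: it works because $m^*$ is a global minimizer of $F$, so that $F(m\,m^*/k)\ge F(m^*)$ for every $m\in\N^*$, independently of whether $m^*$ is unique. If the minimum of $F$ were attained at several points the first eigenvalue might be multiple, but $u_{1,k}$ would still lie among its eigenfunctions, which is all the statement requires. Everything else—the reduction to the $k=1$ branch, the positivity of $h_{1,k}$, and the count of nodal domains—follows directly from Theorem \ref{efrect} and the explicit structure $u_{1,k}=h_{1,k}(y)\sin(kx)$.
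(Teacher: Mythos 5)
Your proposal is correct and follows essentially the same route the paper intends: the scaling identity $\lambda_{1,m}(\Omega_\ell)=\ell^{-2}F(m\ell)$ (the remark preceding the theorem), the existence of the global minimizer $m^*$ of $F$ from the previous theorem, the choice $\ell_k=m^*/k$ to force the optimal frequency to be exactly $k$, and the count of $k$ nodal domains via the positivity of $h_{1,k}$ from Theorem \ref{efrect}(b). You have simply written out the details that the paper leaves implicit, including the correct observation that possible non-uniqueness of $m^*$ is harmless.
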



\section{Conclusions}

The present work is a first step towards a better understanding of the buckling eigenvalue problem in an annulus, and several open questions deserve to be further investigated. Even though an analytical proof is still missing, numerical evidence suggests that the number of angular nodal domains of the first eigenfunction is monotonically increasing with respect to the inner radius, and tends to infinity as the annulus shrinks to the circumference. The positivity of the radial part of the first eigenfunction also seems to hold true according to the numerics, but this still remains an open problem.

A general question of deep interest is to determine necessary and sufficient conditions on the domain $\Omega$ for the positivity of the first buckling eigenfunction. Similarly to the case of the clamped plate eigenvalue problem \eqref{bilaplacian}, convexity is not sufficient to guarantee positivity, and neither is smoothness of the boundary. In this paper we give a partial answer to this question, as we show that annuli always have a sign-changing first eigenfunction, and we studied particularly the case of a shrinking annulus where the minimal number of nodal domains is even expected to diverge. A natural question which arises from our investigation is whether there exists a non-simply connected domain with positive first buckling eigenfunction. Moreover, it is interesting to observe that annuli with sufficiently big inner radius provide an example of domains with a positive first clamped eigenfunction, but with a sign-changing first buckling eigenfunction. One might wonder whether the opposite holds true for some other domain; should this be the case, positivity of clamped and buckling eigenfunctions would then be completely unrelated.

As the annuli $\Omega_a$ are shrinking towards the circumference, one may expect some type of convergence for eigenvalues or eigenfunctions to a limiting problem on $\mathbb S^1$, but this does not seem to be the case since even the normalized eigenvalues $\lambda(\Omega_a)|\Omega_a|$ diverge. Observing the different eigenbranches, we see that the eigenfunctions are naturally forced to localize, but we remark that this is not enough to expect any limiting behavior. It is interesting to note that, as the first eigenvalue keeps on changing branch as $a\to1^-$, the number of nodal domains for an associated eigenfunction increases. We expect this quantity to diverge, and we have numerical evidence for this ansatz (see Table \ref{tabellanumerica}), but an analytical proof of this is still missing. Indeed, if we chop the shrinking annulus into its nodal domains and study them as ``close to rectangles'', the analysis leads towards the same direction. In particular, as the first eigenfunction will always have at least a diametrical nodal line, if we think the semi-annulus as a rectangle, then Theorem \ref{crescendo} proves precisely that the number of nodal domain of the first eigenvalue will diverge.
We stress the fact that the clamped plate problem \eqref{bilaplacian} has a totally different behavior, since in that case the first eigenfunction is simple and does not change sign in the regime $a\to1^-$. This highlights that, while the two problems present a lot of similarities, they are significantly different and this in particular motivates the need for further investigations on both problem \eqref{bilaplacian} and \eqref{bucklingequation}.

Finally, the extension of our results to higher dimensional domains still remains open. Multidimensional annuli will be still interesting to study as the eigenfunctions are explicitly written in terms of Bessel functions and spherical harmonics as we do in Section \ref{sec2}, but now the equation involves ultraspherical Bessel functions. Moreover, if the dimension is greater than two, then singlets have zero $H^2$-capacity, which implies the spectral convergence for $a\to0^+$, i.e., when the annulus approaches the ball. In particular it follows that, in higher dimension, the annulus with an inner radius close to zero must then have a simple first eigenvalue. However, the study of the sign of an associated eigenfunction is even more complicated as it involves the study of higher order ultraspherical Bessel functions.



\section*{Acknowledgements}

This work was partially supported by the Funda\c c\~{a}o para a Ci\^{e}ncia e a Tecnologia
(Portugal) through the program ``Investigador FCT'' with reference IF/00177/2013 and the project {\it Extremal spectral quantities and related problems} (PTDC/MAT-CAL/4334/2014). A significant part of the research in this paper was carried out while the first author held a post-doctoral position at the University of Lisbon within the scope of this project. 
The authors express their gratitude to the University of Lisbon and the University of Aix-Marseille for the hospitality that helped the development of this paper. 
The first author is a member of the Gruppo Nazionale per l'Analisi
Matematica, la Probabilit\`a e le loro Applicazioni (GNAMPA) of the Istituto Naziona\-le di Alta Matematica (INdAM).


\bigskip

\end{document}